\pdfoutput=1
\documentclass[11pt]{article}

\usepackage[a4paper,margin=1.1in]{geometry}
\usepackage[T1]{fontenc}
\usepackage[utf8]{inputenc}
\usepackage{lmodern}
\usepackage{microtype}
\usepackage[authoryear,longnamesfirst]{natbib}
\usepackage{amsmath,amssymb,amsthm,mathtools}
\usepackage{url}
\usepackage[colorlinks=true,linkcolor=blue,citecolor=blue,urlcolor=blue]{hyperref}
\hypersetup{
  pdftitle={Counterexamples to a multivariable matrix Young conjecture},
  pdfauthor={Zhekai Pang}
}

\theoremstyle{plain}
\newtheorem{conjecture}{Conjecture}
\newtheorem{theorem}{Theorem}
\newtheorem{lemma}{Lemma}

\theoremstyle{remark}
\newtheorem{remark}{Remark}

\title{Counterexamples to a multivariable matrix Young conjecture}
\author{Zhekai Pang\thanks{Universitat Pompeu Fabra, Barcelona, Spain. Email: \texttt{zhekai.pang@upf.edu}.}}
\date{}

\begin{document}

\maketitle

\begin{abstract}
We disprove Conjecture~5.1 of Lin concerning a multivariable extension of Ando's matrix Young inequality for positive semidefinite matrices. For every integer $m\geq4$, the conjectured eigenvalue inequality fails at the largest eigenvalue for $2\times2$ real rank-one orthogonal projections. We also give a $3\times3$ real positive semidefinite counterexample for $m=3$, where the failure occurs at the second eigenvalue.
\end{abstract}

\medskip
\noindent\textbf{Keywords:} Hua determinant inequality; Young type inequality; Rank-one projection.

\medskip
\noindent\textbf{2020 MSC:} 15A42; 15A45; 15A60.

\section{Introduction}\label{sec:intro}

Young's inequality is a fundamental scalar estimate that has prompted a substantial literature on matrix, norm, and trace analogues. Ando established a two-factor version in the Loewner order \citep{Ando1995}. Manjegani studied H\"older and Young inequalities for traces of operators \citep{Manjegani2007}, Bhatia and Kittaneh revisited a related matrix arithmetic--geometric mean problem \citep{BhatiaKittaneh2008}, Ko\v{s}em developed further matrix versions \citep{Kosem2009}, and Drury settled the singular-value question posed by Bhatia and Kittaneh \citep{Drury2012}.

Let $M_n$ denote the set of $n$ by $n$ complex matrices. For $X\in M_n$, write
\[
 |X|=(X^*X)^{1/2}.
\]
For Hermitian matrices $X$ and $Y$, the notation $X\preceq Y$ means that $Y-X$ is positive semidefinite. If $A,B\in M_n$ and $p,q>1$ satisfy $1/p+1/q=1$, Ando's matrix Young inequality asserts that there is a unitary matrix $U$ such that
\begin{equation}\label{eq:ando}
 U|AB^*|U^*\preceq \frac1p|A|^p+\frac1q|B|^q.
\end{equation}
Equivalently, each singular value of $AB^*$ is bounded by the corresponding eigenvalue of the matrix on the right.

Multivariable inequalities of this type are more delicate because the order of the factors matters. \citet{LiuPoonWang2017} obtained a generalized H\"older eigenvalue inequality for products of contractive matrices. More recently, \citet{Lin2024} proved a version of Young's inequality in which the summands on the right may be independently unitarily conjugated. In the same paper, Lin proposed a stronger conjecture for positive semidefinite matrices, with no unitary conjugations on the right. A determinant-level consequence was already known from \citet{Manjegani2007}, but the coordinatewise eigenvalue inequalities were left open.

This paper shows that Lin's conjecture fails for every $m\geq3$. For $m\geq4$, we give a uniform family of $2\times2$ counterexamples consisting of rank-one orthogonal projections. The case $m=3$ is disproved by a $3\times3$ positive semidefinite example for which the second eigenvalue inequality fails. Section~\ref{sec:conjecture} recalls the conjecture and its eigenvalue formulation. Section~\ref{sec:main} gives the counterexamples and their proofs.

\section{The conjecture}\label{sec:conjecture}

For a Hermitian matrix $H$, its eigenvalues are arranged as
\[
 \lambda_1(H)\geq\lambda_2(H)\geq\cdots\geq\lambda_n(H).
\]

The following is Conjecture~5.1 of \citet{Lin2024}.

\begin{conjecture}[Lin]\label{conj:lin}
Let $A_1,\ldots,A_m\in M_n$ be positive semidefinite, and let $p_1,\ldots,p_m>0$ satisfy
\[
 \frac1{p_1}+\cdots+\frac1{p_m}=1.
\]
There exists a unitary matrix $U\in M_n$ such that
\[
 U|A_1\cdots A_m|U^*
 \preceq
 \frac1{p_1}A_1^{p_1}+\cdots+\frac1{p_m}A_m^{p_m}.
\]
Equivalently, for $j=1,\ldots,n$,
\begin{equation}\label{eq:eigen-conjecture}
 \lambda_j(|A_1\cdots A_m|)
 \leq
 \lambda_j\left(
 \frac1{p_1}A_1^{p_1}+\cdots+\frac1{p_m}A_m^{p_m}
 \right).
\end{equation}
\end{conjecture}

When $m=2$, Conjecture~\ref{conj:lin} is precisely Ando's inequality \eqref{eq:ando}, applied to positive semidefinite matrices.

\section{Main results}\label{sec:main}

\begin{lemma}\label{lem:projection}
If $P$ is an orthogonal projection, then $0\preceq P\preceq I$ and $P^t=P$ for every real number $t>0$.
\end{lemma}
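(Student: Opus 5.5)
The plan is to use the spectral theorem. By definition an orthogonal projection $P\in M_n$ satisfies $P=P^*=P^2$. Since $P$ is Hermitian, we can write $P=V\,\mathrm{diag}(\mu_1,\ldots,\mu_n)\,V^*$ with $V$ unitary and each $\mu_i$ real. From $P^2=P$ we get $\mu_i^2=\mu_i$, so every eigenvalue lies in $\{0,1\}$. Consequently $P=VDV^*$, where $D$ is diagonal with entries in $\{0,1\}$.

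\textbf{Loewner bounds.} Both $P$ and $I-P=V(I-D)V^*$ are unitarily similar to diagonal matrices with entries in $\{0,1\}$, hence are positive semidefinite. This gives $0\preceq P\preceq I$. Alternatively, one can use $P=P^*P$ and $I-P=(I-P)^*(I-P)$, which follow from idempotence and self-adjointness.

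\textbf{Powers.} For real $t>0$, the power of a positive semidefinite matrix is defined through the functional calculus: $P^t=V\,\mathrm{diag}(\mu_1^t,\ldots,\mu_n^t)\,V^*$. This definition does not depend on the choice of diagonalization. Since $t>0$, we have $0^t=0$ and $1^t=1$, so $\mu_i^t=\mu_i$ for each $i$, and therefore $P^t=P$.

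The only point requiring care is the convention $0^t=0$. It is valid precisely because $t>0$, which is why the statement excludes $t\le0$; for instance $P^0$ would be $I$ under the usual convention. Apart from that, there is no real obstacle.
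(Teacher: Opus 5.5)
Your proof is correct and follows the paper's route: the paper also diagonalizes $P$ via the spectral theorem, with eigenvalues in $\{0,1\}$, and uses $0^t=0$, $1^t=1$ for $t>0$ to get $P^t=P$. The only cosmetic difference is that the paper obtains $0\preceq P\preceq I$ from $x^*Px=\|Px\|^2$ and $x^*(I-P)x=\|(I-P)x\|^2$, which is your ``alternative'' via $P=P^*P$, rather than reading the bounds off the diagonalization.
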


\begin{proof}
Since $P=P^*=P^2$, both $P$ and $I-P$ are orthogonal projections. For every vector $x$,
\[
 x^*Px=\|Px\|^2\geq0,
 \qquad
 x^*(I-P)x=\|(I-P)x\|^2\geq0.
\]
Thus $P\succeq0$ and $I-P\succeq0$, which gives $0\preceq P\preceq I$. By the spectral theorem, there is a unitary matrix $V$ such that
\[
 P=V\begin{pmatrix}I_r&0\\0&0\end{pmatrix}V^*.
\]
The spectral calculus gives
\[
 P^t=V\begin{pmatrix}I_r^t&0\\0&0^t\end{pmatrix}V^*=P
\]
for every $t>0$.
\end{proof}

\begin{lemma}\label{lem:cosine}
Let $g(x)=\cos^x\left(\frac{\pi}{2x}\right)$. Then $g$ is strictly increasing for $x>1$.
\end{lemma}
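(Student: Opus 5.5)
We substitute $u=\pi/(2x)$, which for $x>1$ ranges over $(0,\pi/2)$; there $\cos u>0$, so $g$ is well defined and positive. Since $\log$ is increasing, it suffices to show that
\[
 h(x)=\log g(x)=x\log\cos\left(\frac{\pi}{2x}\right)
\]
is strictly increasing on $(1,\infty)$. We prove this by showing $h'(x)>0$.

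Differentiating, and noting $\frac{d}{dx}\frac{\pi}{2x}=-\frac{\pi}{2x^2}$, we get
\[
 h'(x)=\log\cos\left(\frac{\pi}{2x}\right)+x\cdot\frac{\sin(\pi/(2x))}{\cos(\pi/(2x))}\cdot\frac{\pi}{2x^2}
 =\log\cos u+u\tan u .
\]
So it suffices to prove that $\phi(u)=\log\cos u+u\tan u>0$ for $u\in(0,\pi/2)$. Since $u\mapsto\pi/(2x)$ is a bijection from $(1,\infty)$ onto $(0,\pi/2)$, this gives $h'>0$ on $(1,\infty)$.

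To handle $\phi$, observe that $\phi(0)=0$, with $\phi$ extended continuously to $u=0$, and compute
\[
 \phi'(u)=-\tan u+\tan u+u\sec^2u=u\sec^2u>0 \qquad\text{for } u\in(0,\pi/2).
\]
Hence $\phi$ is strictly increasing on $[0,\pi/2)$, and so $\phi(u)>\phi(0)=0$ for $u>0$. This yields $h'(x)>0$ for all $x>1$, so $h$ is strictly increasing, and therefore $g=e^h$ is strictly increasing.

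The only step needing care is the simplification of $\phi'$: the two $\tan u$ terms cancel, leaving a manifestly positive expression. An alternative is to argue convexity, namely that $-\log\cos u$ is convex with value $0$ at the origin, so $\log\cos u\ge -u\tan u$ by the tangent line inequality. Either route should close the argument.
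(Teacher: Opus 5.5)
Your proposal is correct and takes essentially the same approach as the paper: the paper computes the same derivative $\log\cos u+u\tan u$ (with $u=\pi/(2x)$) and writes it as $\int_0^{u}t\sec^2 t\,\mathrm{d}t>0$. That integral is exactly your observation that $\phi(0)=0$ and $\phi'(u)=u\sec^2u>0$.
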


\begin{proof}
For $x>1$,
\[
 \frac{\mathrm{d}}{\mathrm{d}x}\bigl[\log(g(x))\bigr]=\log\left(\cos\left(\frac{\pi}{2x}\right)\right)+\frac{\pi}{2x}\tan\left(\frac{\pi}{2x}\right)=\int_0^{\pi/(2x)}t\sec^2(t)\,\mathrm{d}t>0.
\]
Thus $\log(g(x))$ is strictly increasing, and hence so is $g$.
\end{proof}

\begin{theorem}\label{thm:counterexample}
Conjecture~\ref{conj:lin} is false for every integer $m\geq4$. More precisely, for each such $m$ there are positive semidefinite matrices $A_1,\ldots,A_m\in M_2$ and exponents $p_1,\ldots,p_m>1$ satisfying
\[
 \frac1{p_1}+\cdots+\frac1{p_m}=1
\]
for which
\[
 \lambda_1(|A_1\cdots A_m|)
 >
 \lambda_1\left(
 \frac1{p_1}A_1^{p_1}+\cdots+\frac1{p_m}A_m^{p_m}
 \right).
\]
\end{theorem}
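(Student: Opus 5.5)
The plan is to take all $A_k$ to be rank-one orthogonal projections in $\mathbb{R}^2$. By Lemma~\ref{lem:projection}, $A_k^{p_k}=A_k$ for every exponent, so the right-hand side is simply a convex combination $\sum_k w_k P_k$ with $w_k=1/p_k$.

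Write $u(\theta)=(\cos\theta,\sin\theta)^T$. Fix $m\geq4$ and set
\[
 \alpha=\frac{\pi}{2(m-1)},\qquad \theta_k=(k-1)\alpha,\qquad P_k=u(\theta_k)u(\theta_k)^T,\qquad k=1,\ldots,m .
\]
Then $u(\theta_1)$ and $u(\theta_m)$ are orthogonal, and consecutive vectors make angle $\alpha$.

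Since $u_k^Tu_{k+1}=\cos\alpha$, the product telescopes:
\[
 P_1\cdots P_m=\cos^{m-1}(\alpha)\,u_1u_m^T .
\]
Hence $|P_1\cdots P_m|=\cos^{m-1}(\alpha)\,u_mu_m^T$, and
\[
 \lambda_1\bigl(|P_1\cdots P_m|\bigr)=\cos^{m-1}\Bigl(\frac{\pi}{2(m-1)}\Bigr)=g(m-1),
\]
with $g$ as in Lemma~\ref{lem:cosine}. Since $m-1\geq3>2$, Lemma~\ref{lem:cosine} gives $g(m-1)>g(2)=\cos^2(\pi/4)=\tfrac12$.

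The freedom that makes this work is that the exponents may be arbitrarily large, so the weights on the intermediate projections can be made tiny. Choose $b>0$ small and set
\[
 \frac1{p_2}=\cdots=\frac1{p_{m-1}}=b,\qquad \frac1{p_1}=\frac1{p_m}=a=\frac{1-(m-2)b}{2},
\]
so that all $p_k>1$ and the reciprocals sum to $1$. Because $u_1\perp u_m$, we have $P_1+P_m=I$, and therefore
\[
 \sum_k\frac1{p_k}P_k=aI+b\sum_{k=2}^{m-1}P_k .
\]
Each $P_k$ satisfies $P_k\preceq I$, which gives
\[
 \lambda_1\Bigl(\sum_k\frac1{p_k}P_k\Bigr)\leq a+(m-2)b=\frac12+\frac{(m-2)b}{2}.
\]
Taking $b<\bigl(2g(m-1)-1\bigr)/(m-2)$ makes this bound strictly smaller than $g(m-1)$, which is exactly the failure of \eqref{eq:eigen-conjecture} at $j=1$.

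The step I expect to be the real obstacle is finding this configuration rather than verifying it. With equal exponents $p_k=m$, the right-hand side has largest eigenvalue $\tfrac12+\cot(\alpha)/(2m)$. For $m=4$ this equals $\tfrac12+\sqrt3/8\approx0.7165$, which exceeds $g(3)\approx0.6495$, so the naive choice does not give a counterexample. The fix is the observation that orthogonal endpoints carrying almost all the weight pin the right-hand side near $\tfrac12$, while the path of small rotations keeps the product's norm at $g(m-1)$. The remaining checks are routine: $p_k>1$, the telescoping product, and strictness of the final inequality.
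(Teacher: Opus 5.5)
Your proof is correct and follows the paper's argument essentially step for step. It uses the same equiangular rank-one projections running from $u_1=(1,0)^T$ to $u_m=(0,1)^T$, the same telescoping product giving $\lambda_1=\cos^{m-1}\bigl(\frac{\pi}{2(m-1)}\bigr)$, and the same bound via $P_1+P_m=I$ and $P_k\preceq I$. The only difference is in the weights: you keep the intermediate weight $b$ free and compare with $g(2)=\tfrac12$, whereas the paper fixes $b=\frac{1}{5(m-2)}$ (so $p_1=p_m=\tfrac52$ and the bound is $\tfrac35$) and checks $g(m-1)\geq g(3)=\tfrac{3\sqrt3}{8}>\tfrac35$.
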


\begin{proof}
Fix $m\geq4$. For $j=1,\ldots,m$, set
\[
 u_j=
 \begin{pmatrix}
 \displaystyle \cos\left(\frac{(j-1)\pi}{2(m-1)}\right)\\[12pt]
 \displaystyle \sin\left(\frac{(j-1)\pi}{2(m-1)}\right)
 \end{pmatrix},
 \qquad
 A_j=u_ju_j^*.
\]
Each $A_j$ is a rank-one orthogonal projection. Choose
\[
 p_1=p_m=\frac52,
 \qquad
 p_j=5(m-2)\quad (2\leq j\leq m-1).
\]
Then $\sum_{j=1}^{m}p_j^{-1}=1$.
By Lemma~\ref{lem:projection}, $A_j^{p_j}=A_j$ for every $j$.

Since $A_j=u_ju_j^*$,
\[
 A_1\cdots A_m
 =\left(\prod_{j=1}^{m-1}u_j^*u_{j+1}\right)u_1u_m^*.
\]
For $1\leq j\leq m-1$, the cosine difference identity gives
{\small
\[
 u_j^*u_{j+1}=\cos\left(\frac{(j-1)\pi}{2(m-1)}\right)\cos\left(\frac{j\pi}{2(m-1)}\right)+\sin\left(\frac{(j-1)\pi}{2(m-1)}\right)\sin\left(\frac{j\pi}{2(m-1)}\right)=\cos\left(\frac{\pi}{2(m-1)}\right).
\]
}
The rank-one matrix $u_1u_m^*$ has largest singular value $1$. It follows that
\[
 \lambda_1(|A_1\cdots A_m|)
 =\cos^{m-1}\left(\frac{\pi}{2(m-1)}\right).
\]
Since $m-1\geq3$, Lemma~\ref{lem:cosine} yields
\[
 \lambda_1(|A_1\cdots A_m|)
 \geq \cos^3\left(\frac{\pi}{6}\right)
 =\frac{3\sqrt3}{8}
 >\frac35.
\]

On the other hand,
\[
 \frac1{p_1}A_1^{p_1}+\cdots+\frac1{p_m}A_m^{p_m}
 =\frac25(A_1+A_m)
 +\frac1{5(m-2)}\sum_{j=2}^{m-1}A_j.
\]
The endpoint vectors are $u_1=(1,0)^T$ and $u_m=(0,1)^T$, so $A_1+A_m=I_2$. Lemma~\ref{lem:projection} also gives $0\preceq A_j\preceq I_2$. Consequently,
\[
 \frac1{p_1}A_1^{p_1}+\cdots+\frac1{p_m}A_m^{p_m}
 \preceq \frac25 I_2
 +\frac{m-2}{5(m-2)}I_2
 =\frac35 I_2.
\]
Thus
\[
 \lambda_1\left(
 \frac1{p_1}A_1^{p_1}+\cdots+\frac1{p_m}A_m^{p_m}
 \right)
 \leq\frac35
 <\lambda_1(|A_1\cdots A_m|),
\]
which contradicts the eigenvalue inequality \eqref{eq:eigen-conjecture}.
\end{proof}

The preceding construction applies when $m\geq4$. The remaining case is settled by the following counterexample.

\begin{theorem}\label{thm:m3}
Conjecture~\ref{conj:lin} is false for $m=3$. More precisely, there are real positive semidefinite matrices $A_1,A_2,A_3\in M_3$ such that, for $p_1=3/2$ and $p_2=p_3=6$,
\[
 \lambda_2(|A_1A_2A_3|)
 >
 \lambda_2\left(\frac23A_1^{3/2}+\frac16A_2^6+\frac16A_3^6\right).
\]
\end{theorem}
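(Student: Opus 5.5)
The plan is to give an explicit real $3\times 3$ example in which $A_2$ and $A_3$ are orthogonal projections and $A_1$ is not. Lemma~\ref{lem:projection} then gives $A_2^6=A_2$ and $A_3^6=A_3$, so only $A_1^{3/2}$ requires any spectral calculus. First, a counting argument shows why such a mixture is needed:
\begin{itemize}
\item If $A_1$ has rank one, then $A_1A_2A_3$ has rank at most one and $\lambda_2(|A_1A_2A_3|)=0$. So $A_1$ must have rank at least two.
\item If $A_1$ is a rank-two projection, then the right-hand side dominates $\frac23A_1$, forcing $\lambda_2\geq\frac23$. Meanwhile $\lambda_2(|A_1A_2A_3|)\leq 1$ leaves too little room.
\end{itemize}
The mechanism I would exploit is the gap between $t$ and $\frac23t^{3/2}$ for small $t$. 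I would take
\[
 A_1=\operatorname{diag}(1,t,0),\qquad 0<t<1,
\]
and let $A_2$ and $A_3$ be rank-two real projections, with ranges chosen as planes close to $\operatorname{span}\{e_1,e_2\}$ but tilted in different directions. The second singular value of $A_1A_2A_3$ should then be of order $t\cdot c$, with $c$ close to $1$. The right-hand side has second eigenvalue roughly $\frac23t^{3/2}$ plus the contribution of $\frac16(A_2+A_3)$ transverse to the top eigenvector.

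To fix concrete parameters, I would first run a numerical search over $t$ and the tilt angles of the two planes (say each plane is $u^{\perp}$ for a unit normal $u$ near $e_3$). The goal is a point where $\lambda_2(|A_1A_2A_3|)-\lambda_2(\text{RHS})$ is clearly positive. I would then round to parameters for which all entries are rational, or at worst involve a single square root, so that $A_1^{3/2}=\operatorname{diag}(1,t^{3/2},0)$ is explicit. A natural choice is $t$ a perfect square, such as $t=1/4$ or $t=1/9$.

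The verification has two sides.
\begin{itemize}
\item \emph{Upper bound on the right-hand side.} Set $H=\frac23A_1^{3/2}+\frac16A_2+\frac16A_3$. I would use the Courant--Fischer formula $\lambda_2(H)=\min_{v}\max_{x\perp v,\|x\|=1}x^*Hx$. Exhibiting one vector $v$ (near the top eigenvector of $H$) reduces the bound to the largest eigenvalue of a $2\times2$ compression, which can be computed exactly.
\item \emph{Lower bound on the left-hand side.} Let $X=A_1A_2A_3$. Then $\lambda_2(|X|)^2=\lambda_2(X^*X)\geq\min_{x\in W,\|x\|=1}x^*X^*Xx$ for any two-dimensional subspace $W$. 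This again reduces to a $2\times2$ computation, namely the smaller eigenvalue of a compression.
\end{itemize}
Alternatively, since everything is $3\times 3$, I can compute the characteristic polynomials exactly and locate the relevant roots by sign changes at rational points.

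The main obstacle is finding parameters with a comfortable margin. The two planes must make $A_2A_3$ nearly the identity on the two-dimensional range where $A_1$ lives, so that the left-hand side is large. At the same time, $A_2+A_3$ must have small second eigenvalue after the dominant $e_1$-direction is removed, but these two planes in $\mathbb{R}^3$ always meet in a line, and that line carries eigenvalue $2$ for $A_2+A_3$. I would try to align that intersection line with $e_1$, which is already the top direction because $A_1$ has eigenvalue $1$ there. This way the large contributions pile up in $\lambda_1$ rather than $\lambda_2$. The numerical search is then aimed at the tilt of the complementary directions, trading the product factor $c$ against the transverse contribution of $\frac16(A_2+A_3)$.
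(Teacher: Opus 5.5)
\textbf{Gap: the proposal never produces the matrices, and the regime you plan to search contains no example.} Your verification tools (a compression plus Cauchy interlacing for the sum, an exact characteristic polynomial for the product) are the right ones, and they are what the paper uses. But exhibiting the matrices is the entire content of the theorem, and you defer it to a numerical search.

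Here is why your regime fails. Write $A_2=I-aa^{\mathsf T}$, $A_3=I-bb^{\mathsf T}$ and $c=|a^{\mathsf T}b|$.
\begin{itemize}
\item \emph{Floor on the sum.} The eigenvalues of $A_2+A_3$ are $2$, $1+c$, $1-c$. Since $H\succeq\frac16(A_2+A_3)$, we get $\lambda_2(H)\ge\frac{1+c}{6}\ge\frac16$.
\item \emph{Ceiling on the product.} $\sigma_2(A_1A_2A_3)\le\min\{\sigma_2(A_1)\|A_2A_3\|,\ \|A_1\|\,\sigma_2(A_2A_3)\}=\min\{t,c\}$.
\item \emph{Consequence.} A counterexample needs $\min\{t,c\}>\frac{1+c}{6}$. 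So $t=1/9$ is impossible. $t=1/4$ already needs $c<\frac12$, i.e.\ normals more than $60^\circ$ apart. For two planes both close to $\operatorname{span}\{e_1,e_2\}$ ($c\approx1$), $t$ must exceed roughly $\frac13$.
\end{itemize}
What your heuristic misses is that two nearly equal planes give $A_2+A_3\approx2(e_1e_1^{\mathsf T}+e_2e_2^{\mathsf T})$. Aligning the intersection line with $e_1$ leaves the eigenvalue $1+c\approx2$ sitting near $e_2$. So the ``gap between $t$ and $\frac23t^{3/2}$'' is consumed exactly, as scalar Young $t\cdot1\cdot1\le\frac23t^{3/2}+\frac16+\frac16$ predicts.

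Tilting does not rescue this. With the alignment, $e_1$ is a common eigenvector on which both sides equal $1$. What remains is a $\lambda_1$ comparison for three rank-one matrices on $\operatorname{span}\{e_2,e_3\}$: $t\cos\alpha\cos(\alpha-\beta)$ against $\lambda_1\bigl(\frac23t^{3/2}e_2e_2^{\mathsf T}+\frac16pp^{\mathsf T}+\frac16qq^{\mathsf T}\bigr)$. Here $\alpha,\beta$ are the rotation angles of the planes about the $e_1$-axis, and $p,q$ span their traces in $\operatorname{span}\{e_2,e_3\}$. Expand to second order in the tilts; whether or not you align, only the $e_2$-components of $a,b$ enter. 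The tilts then do not close the deficit $\frac23t^{3/2}+\frac13-t$. For instance, at $t=1$, where that deficit vanishes, the quadratic term is $\frac1{36}(-10x^2+8xy-13y^2)$ with $x=a_2$, $y=b_2-a_2$, which is negative definite.

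\textbf{The missing idea.} You need to break the normalization in which every factor is a contraction and the top direction sits at Young equality. The paper does this with a factor having a very large eigenvalue in a direction the sum ``spends'' on $\lambda_1$.
\begin{itemize}
\item $A_3=6^{1/6}\operatorname{diag}(100,1,1)$ contributes $100^6$ only to the $(1,1)$ entry of the sum. That entry is discarded by passing to the principal submatrix $B$ on $\operatorname{span}\{e_2,e_3\}$ and using interlacing.
\item $A_2=6^{1/6}(I-P_w)$ with $w=(10,1,0)^{\mathsf T}$ sends $100e_1$ to $6^{1/6}\cdot\frac{100}{101}(1,-10,0)^{\mathsf T}$, which points essentially along $-e_2$.
\item $A_1=(3/2)^{2/3}(25P_u+P_v)$ annihilates this vector's $u$-component. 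Through its cheap $P_v$ part, it turns the vector into $\frac{3}{\sqrt[3]{2}}\bigl(0,-\frac{800}{101},-\frac{400}{101}\bigr)^{\mathsf T}$, the first column of the product.
\end{itemize}
The product thus acquires $\sigma_2>8$, while $\lambda_1(B)<8$. Nothing like this is available when all eigenvalues are at most $1$ and the top direction is balanced.
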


\begin{proof}
For a nonzero vector $x\in\mathbb{R}^3$, write
\[
 P_x=\frac{xx^{\mathsf T}}{x^{\mathsf T}x}
\]
for the orthogonal projection onto $\operatorname{span}\{x\}$.
Set
\[
 u=\begin{pmatrix}10\\1\\-2\end{pmatrix},
 \qquad
 v=\begin{pmatrix}0\\2\\1\end{pmatrix},
 \qquad
 w=\begin{pmatrix}10\\1\\0\end{pmatrix}.
\]
Since $u^{\mathsf T}v=0$, the projections $P_u$ and $P_v$ have orthogonal ranges. Define
\[
 A_1=\left(\frac32\right)^{2/3}(25P_u+P_v),
 \qquad
 A_2=6^{1/6}(I_3-P_w),
 \qquad
 A_3=6^{1/6}\operatorname{diag}(100,1,1).
\]
These matrices are real and positive semidefinite.

Because $P_u$ and $P_v$ are orthogonal projections with orthogonal ranges,
\[
 (25P_u+P_v)^{3/2}=125P_u+P_v.
\]
Lemma~\ref{lem:projection} therefore gives
\[
 \frac23A_1^{3/2}+\frac16A_2^6+\frac16A_3^6
 =125P_u+P_v+(I_3-P_w)+\operatorname{diag}(100^6,1,1).
\]
Direct multiplication yields
\[
 A_1A_2A_3=\frac{3}{\sqrt[3]{2}}\,C,
 \qquad
 C=
 {\setlength{\arraycolsep}{10pt}
 \begin{pmatrix}
 0 & 0 & -\dfrac{100}{21}\\[9pt]
 -\dfrac{800}{101} & \dfrac{80}{101} & -\dfrac{8}{105}\\[9pt]
 -\dfrac{400}{101} & \dfrac{40}{101} & \dfrac{121}{105}
 \end{pmatrix}}.
\]
An exact calculation gives
\[
 \det(tI_3-C^{\mathsf T}C)
 =\frac{t\bigl(10605t^2-1094621t+20000000\bigr)}{10605}.
\]
Let
\[
 h(t)=10605t^2-1094621t+20000000.
\]
The two roots of $h$ are the squares of the two nonzero singular values of $C$. At $t=16$,
\[
 h(16)=5200944>0,
 \qquad
 h'(16)=-755261<0.
\]
Since $h$ opens upward, $h'(16)<0$ places $16$ to the left of its vertex; together with $h(16)>0$, this shows that $16$ lies to the left of the smaller root. Hence the second singular value of $C$ is greater than $4$. As $3/\sqrt[3]{2}>2$, we have $\lambda_2(|A_1A_2A_3|)>8$.

The principal submatrix obtained by deleting the first row and column from
$\frac23A_1^{3/2}+\frac16A_2^6+\frac16A_3^6$ is
\[
 B=
 {\setlength{\arraycolsep}{10pt}
 \begin{pmatrix}
 \dfrac{42214}{10605} & -\dfrac{208}{105}\\[9pt]
 -\dfrac{208}{105} & \dfrac{731}{105}
 \end{pmatrix}}.
\]
Moreover,
\[
 8I_2-B=
 {\setlength{\arraycolsep}{10pt}
 \begin{pmatrix}
 \dfrac{42626}{10605} & \dfrac{208}{105}\\[9pt]
 \dfrac{208}{105} & \dfrac{109}{105}
 \end{pmatrix}}.
\]
The upper-left entry of $8I_2-B$ is positive, and
$\det(8I_2-B)=\frac{878}{3535}>0$. Hence $8I_2-B$ is positive definite and $\lambda_1(B)<8$.
Since $B$ is a $2\times2$ principal submatrix of the $3\times3$ Hermitian matrix
$\frac23A_1^{3/2}+\frac16A_2^6+\frac16A_3^6$, the Cauchy interlacing theorem
\citep[Theorem~4.3.17]{HornJohnson2013} gives
\[
 \lambda_2\left(\frac23A_1^{3/2}+\frac16A_2^6+\frac16A_3^6\right)
 \leq\lambda_1(B)<8.
\]
Together with the estimate above, this yields
\[
 \lambda_2(|A_1A_2A_3|)>8>
 \lambda_2\left(\frac23A_1^{3/2}+\frac16A_2^6+\frac16A_3^6\right),
\]
which proves the asserted strict inequality.
\end{proof}

\begin{remark}
Theorems~\ref{thm:counterexample} and~\ref{thm:m3} show that Conjecture~\ref{conj:lin} fails for every $m\geq3$. When $m=2$, it is Ando's inequality~\eqref{eq:ando}.
\end{remark}

\end{document}